\newcommand{\eop}{\bigstar}  
\newcommand{\cf}{{\rm cf}}
\newenvironment{proof}{\noindent{\bf Proof.}}{\par\bigskip}
\newtheorem{THEOREM}{Theorem}[section]
\newtheorem{Conclusion}[THEOREM]{Conclusion}
\newtheorem{Hypothesis}[THEOREM]{Hypothesis}
\newtheorem{LEMMA}[THEOREM]{Lemma}
\newtheorem{Main Theorem}[THEOREM]{Main Theorem}
\newenvironment{main Theorem}{\begin{Main Theorem}} 
{\end{Main Theorem}}
\newtheorem{Theorem}[THEOREM]{Theorem}
\newenvironment{theorem}{\begin{Theorem}}{\end{Theorem}}
\newtheorem{Definition}[THEOREM]{Definition}
\newtheorem{Conventions}[THEOREM]{Conventions}
\newtheorem{Main Definition}[THEOREM]{Main Definition}
\newenvironment{main definition}{\begin{Main Definition}}
{\end{Main Definition}}
\newtheorem{Lemma}[THEOREM]{Lemma}
\newtheorem{Notation}[THEOREM]{Notation}
\newtheorem{Convention}[THEOREM]{Convention}
\newtheorem{Note}[THEOREM]{Note}
\newtheorem{Observation}[THEOREM]{Observation}
\newtheorem{Remark}[THEOREM]{Remark}
\newtheorem{Question}[THEOREM]{Question}
\newtheorem{Main Fact}[THEOREM]{Main Fact}
\newenvironment{main Fact}{\begin{Main Fact}}{\end{Main Fact}}
\newtheorem{Fact}[THEOREM]{Fact}
\newtheorem{Subfact}[THEOREM]{Subfact}
\newtheorem{Claim}[THEOREM]{Claim}
\newtheorem{Main Claim}[THEOREM]{Main Claim}
\newenvironment{main claim}{\begin{Main Claim}}{\end{Main Claim}}
\newtheorem{Crucial Claim}[THEOREM]{Crucial Claim}
\newenvironment{crucial claim}{\begin{Crucial Claim}}{\end{Crucial Claim}}
\newtheorem{Subclaim}[THEOREM]{Subclaim}
\newtheorem{Sublemma}[THEOREM]{Sublemma}
\newtheorem{Corollary}[THEOREM]{Corollary}
\newtheorem{Example}[THEOREM]{Example}
\newtheorem{Problem}[THEOREM]{Problem}
\newtheorem{Proposition}[THEOREM]{Proposition}
\newtheorem{Conjecture}[THEOREM]{Conjecture}
\newtheorem{Discussion}[THEOREM]{Discussion}
\newenvironment{Proof of the Subfact}
{\noindent{\bf Proof of the Subfact.}}{\par\bigskip}
\newenvironment{Proof of the Theorem}
{\noindent{\bf Proof of the Theorem.}}{\par\bigskip}
\newenvironment{Proof of the Proposition}
{\noindent{\bf Proof of the Proposition.}}{\par\bigskip}
\newenvironment{Proof of the Conclusion}
{\noindent{\bf Proof of the Conclusion.}}{\par\bigskip}
\newenvironment{Proof of the Observation}
{\noindent{\bf Proof of the Observation.}}{\par\bigskip}
\newenvironment{Proof of the Fact}
{\noindent{\bf Proof of the Fact.}}{\par\bigskip}
\newenvironment{Proof of the Lemma}
{\noindent{\bf Proof of the Lemma.}}{\par\bigskip}
\newenvironment{Proof of the Claim}
{\noindent{\bf Proof of the Claim.}}{\par\bigskip}
\newenvironment{Proof of the Corollary}
{\noindent{\bf Proof of the Corollary.}}{\par\bigskip}
\newenvironment{Proof of the Subclaim}
{\noindent{\bf Proof of the Subclaim.}}{\par\medskip}
\newenvironment{Proof of the Main Claim}
{\noindent{\bf Proof of the Main Claim.}}{\par\bigskip}
\newenvironment{Proof of the Crucial Claim}
{\noindent{\bf Proof of the Crucial Claim.}}{\par\bigskip}
\newcommand{\rest}{\upharpoonright}  
\newcommand{\TT}{{\cal T}}
\newcommand{\dom}{\rm dom}
\def\mathunderaccent#1#2 {\let\theaccent#1\skewfactor#2
\mathpalette\putaccentunder}
\def\putaccentunder#1#2{\oalign{$#1#2$\crcr\hidewidth
\vbox to.2ex{\hbox{$#1\skew\skewfactor\theaccent{}$}\vss}\hidewidth}}
\newcommand{\subjclass}[2][2010]{%
  \let\@oldtitle\@title%
  \gdef\@title{\@oldtitle\footnotetext{#1 \emph{Mathematics subject classification.} #2}}%
}
\newcommand{\keywords}[1]{%
  \let\@@oldtitle\@title%
  \gdef\@title{\@@oldtitle\footnotetext{\emph{Key words and phrases.} #1.}}%
}
\author{Mirna D\v zamonja\\ School of Mathematics, University of East Anglia\\Norwich, NR4 7TJ, UK
\\\scriptsize{h020@uea.ac.uk} }
\title{The singular world of singular cardinals}
\subjclass{03E05}
\keywords{singular cardinals, SCH, combinatorics}
\begin{document}
\maketitle
\begin{abstract} The article uses two examples to explore the statement that, contrary to the common wisdom, the properties
of singular cardinals are actually more intuitive than those of the regular ones. 
\footnote{The author thanks EPSRC for their grant EP/I00498. 
}
\end{abstract}

\subjclass{03E05}
\keywords{singular cardinals, SCH, combinatorics}

\section{Introduction} Infinite cardinals can be regular or singular. Regular cardinals and especially successors of regular
cardinals, tend to lend themselves to easier and better understood combinatorial methods and hence are often considered
as being in some sense easier. For example, Todd Eisworth in his Handbook of Set Theory article \cite{todd} artfully exposes difficulties
one has in dealing with the combinatorics of the successors of singulars and explains on a number of examples why the methods used at a successor of a regular most often cannot work when dealing with the successor of a singular. Indeed, it is known that
in many situations, dealing with singular cardinals and their successors has to involve techniques beyond combinatorics and forcing,
and it notably requires large cardinals. This is true even for such seemingly elementary properties as the calculation of the 
size of the power set of the cardinal $\kappa$ as a function of the size of the power sets of the cardinals below, which is
basically the content of the famous Singular Cardinal Hypothesis and which has lead to some of the deepest results throughout set theory. In fact, the common wisdom and the thesis of \cite{todd} are that if the universe is close to $L$ then the singular cardinals
and their successors are ``manageable", and the opposite is true in the models obtained by using strong enough large cardinal
hypothesis.

We shall explore the antithesis, which is that (a) in some situations singular cardinals are more manageable than the regular
ones and (b) in some models obtained from large cardinals the successors of singulars actually behave quite close to how
they do in $L$, even if we do our best to mess up $L$ by changing the cardinal arithmetic drastically. Our exposition will involve two examples, one for each one of (a) and (b), which we now explain.

\section{Tree embeddings} In this part of the paper we discuss an issue that we investigated with V\"a\"an\"anen in \cite{singtrees},
inspired by work that he had initiated in a number of earlier papers, concentrating there on $\aleph_1$ and other
regular cardinals. In contrast, we worked with
$\kappa$ a singular cardinal of countable cofinality and obtained a surprisingly different situation.

Consider rooted trees of height and cardinality $\kappa$, which 
we call $\kappa$-{\em Trees} \footnote{This is different than the usual
{\em $\kappa$-trees}, which
are also required to have levels of size $<\kappa$.}.
We are interested
in $\kappa$-Trees which in addition do not have $\kappa$-branches, which we call {\em bounded}. We studied
the natural notion of reduction,
which is simply a strict-order preserving function from one tree to another. The existence of a reduction from $T$ to $T'$
is denoted by $T\le T'$. Furthermore, we write $T<T'$
if both $T\le T'$ and $T'\not\le T$ hold. 
Considerations of tree reductions have arisen in the context of
model theory and descriptive set theory, as we now explain. Any undefined or unreferenced notion can be found in
 \cite{singtrees}, where in particular we give ample references to EF games and chain models.

A relational structure of size $\lambda$ can be considered as an element of $2^\lambda$. To understand the
classification of such models up to isomorphism one uses the Ehrenfeucht-Fra\"iss\'e  (EF) games, especially in the
countable case.
There is an older method of classification, using
the fact that for two fixed countable models the set of all isomorphisms between them is $F_{\delta\sigma}$, and
hence the set of pairs of models that are non-isomorphic is co-analytic. 
In fact the
set,
\[
\{(A,B):\,A, B\mbox{ countable models  and }\exists f:\,A\cong B\}
\]
is the same set as the set of pairs $(A,B)$ of countable models for which II has a winning strategy in the EF game.
This analysis made it
possible to attach to each  pair $(A,B)$ of non-isomorphic countable models a  rank, called Scott
watershed $S(A,B)$,
which in this case is an ordinal $\alpha<\omega_1$.

The rank can be thought of as a
clock of the EF game in the following sense: during the EF game the Nonisomorphism player I has to 
go down this clock at every stage,
starting at $\alpha$ itself,
and a condition of winning for
I is that he has not run out of time before the lack of an isomorphism has been exposed.
This game is called the dynamic $EF$ game of rank $\alpha+1$ and denoted by $EFD_{\alpha+1}$.
The fact that  $S(A,B)=\alpha+1$ means that
II wins $EFD_\alpha$
(and hence $EFD_\beta$ for any $\beta<\alpha$), while I wins $EFD_{\alpha+1}$
(and hence $EFD_\beta$ for any $\beta>\alpha$).

For uncountable models, say of size $\aleph_1$,
one can generalise the Ehrenfeucht-Fra\"iss\'e Theorem
by considering games of length $\omega_1$. One is  then tempted to find the
corresponding notion of the Scott
watershed.  It turns out that it is no longer enough to use the ordinals,
as the game is transfinite. The right notion this time is that of bounded $\aleph_1$-Trees, the class
of which we shall denote by $\TT_{\aleph_1}$. If $T\le T'$ then it is easier for II to win $EFD_T$ than $EFD_{T'}$.
Respectively, if $T\le T'$ then it is easier for I to win $EFD_{T'}$ than $EFD_{T}$. Finally, if II wins $EFD_{T}$
and I wins $EFD_{T'}$, then one can prove $T<T'$.
In \cite{HyVa} it was shown by Hytinnen and V\"a\"an\"anen 
that the analogue of the Scott watershed exists also in the uncountable case.
Naturally, the importance of $(\TT_{\aleph_1},\le)$ in this context led to a systematic
study of its structural properties and a development of the descriptive set theory
of the space $2^{\omega_1}$ based on $\TT_{\aleph_1}$.  The theory of $(\TT_{\lambda},\le)$ for $\lambda$ successor of
regular is also quite known, although it is not completely parallel to that of
$(\TT_{\aleph_1},\le)$.

When moving to the singular cardinals in \cite{singtrees}, the surprise was that new possibilities opened up. Firstly, it is possible to make links between
chain models and the
infinitary logic $L_{\kappa\kappa}$. For $\kappa$ of singular cofinality one can develop model theory
of $L_{\kappa\kappa}$ based on the concept of a {\em chain model}, as was done by C. Karp and her successors. 
A chain model is an ordinary model $A$
equipped with a presentation of $A$ as a union of a chain $A_0\subseteq A_1\ldots \subseteq A_n\subseteq \ldots$
for $n<\omega$. The chain is not assumed to be elementary. Let us denote such a
system as $(A_n)$. The point of chain models is the following modification of the
truth definition
of $L_{\kappa\kappa}$
\begin{equation}\label{newtruth}
(A_n)\models\exists \bar{x}\varphi( \bar{x})\iff\mbox {there are }n<\omega \mbox{ and } \bar{a}\in A_n \mbox{ with }
A\models \varphi( \bar{a}),
\end{equation}
where $\bar{x}$ is a sequence of length $<\kappa$. If we restrict to chain models, the model
theory of $L_{\kappa\kappa}$ is very much like that of $L_{\omega_1\omega}$.
For example, one can prove the Completeness Theorem using consistency properties, and one can also prove
undefinability of well order, Craig Interpolation Theorem, Beth Definability Theorem, etc.
(None of these theorems is true for the classical $L_{\kappa\kappa}$ logic).
In \cite{singtrees} we extended Scott's analysis of countable models to chain
models of size $\kappa$.
In particular, we considered versions of EF game for (chain) models of a singular cardinality $\kappa$ of countable
cofinality and discovered that the relevant clock trees of these games are bounded
$\kappa$-Trees.

In fact the main point of this is that $\kappa$-Trees for $\kappa$ as above have properties that make
them rather similar to ordinals. The reason for this is that there is a natural notion of rank.
Using this notion we can for example show that the universality number of bounded
$\kappa$-Trees under reduction is just $\kappa^+$, and that
within each rank in $[1,\kappa^+)$ the universality number is just $\omega$. This is in sharp contrast with
the situation of $\lambda$-Trees where $\lambda$ is a regular cardinal. For
example for $\lambda=\aleph_1$ Mekler and V\"a\"an\"anen \cite{MekVa}
have established that
the universality number for bounded $\lambda$-Trees under reduction cannot be computed in ZFC, and the consistency of this number being
equal to 1 for $\lambda=\aleph_1$ is not known.

\section{Universal graphs} In this section we shall discuss an embedding question which comes from an even more familiar object than trees, namely graphs. Given a cardinal $\kappa$, we are interested to know what is the smallest size of a family of graphs of size
$\kappa$ which embeds every graph of size $\kappa$ as an induced subgraph. This is known as the universality number. For
$\kappa=\aleph_0$ this number is 1, as the random graph is a universal countable graph. For uncountable $\kappa$ the situation
becomes sensitive to the axioms of set theory. Namely, by the classical results in model theory on the existence of saturated and special models (see \cite{ChKe}), in the presence of GCH there is a universal graph on every infinite $\kappa$, and in 
fact $\kappa=2^{<\kappa}$ suffices. On the other hand,
a result of Shelah mentioned in \cite{Sh175} and described in \cite{KjSh409}, is that adding $\aleph_2$ Cohen reals to a model
of CH makes
the universality number of graphs at $\aleph_1$ equal to $\aleph_2$. This is an easy proof, in fact Shelah says in the abstract
of \cite{Sh175} ``The consistency of the non-existence of a universal graph of power $\aleph_1$ is trivial. Add $\aleph_2$ generic Cohen reals.", and instead he concentrates on a much more complex proof of the consistency of the existence of a universal graph at $\aleph_1$ and the negation of CH (which in fact was not right in \cite{Sh175}. Shelah  corrected his proof in \cite{Sh175a} and Mekler gave a different proof in \cite{Mekler}). Other successors of regulars behave in a similar way, although neither Mekler's nor Shelah's proof seem to carry over from $\aleph_1$ to larger successors of regulars. Namely, in \cite{DjSh614} D\v zamonja and Shelah 
obtained the consistency of the universality number of graphs at $\kappa^+$ for an arbitrary large regular $\kappa$ being equal 
$\kappa^{++}$ while $2^\kappa$ is as large as desired. The negative consistency results directly translate to other successors of regulars and even to a class of them, and to let the reader appreciate the way Cohen's forcing is used, we give a rendition of that argument here. (The proof of Theorem \ref{Cohensubsets} presented in \cite{KjSh409} is less formal.)

\begin{theorem}[Shelah, see \cite{KjSh409}] \label{Cohensubsets} Suppose that $\kappa^{<\kappa}=\kappa$ and
let $\mathbb P$ be the forcing to add $\lambda$ many, with $\cf(\lambda)\ge\kappa^{++}$ and $\lambda\ge 2^{\kappa^+}$, Cohen subsets to $\kappa$. Then the universality number 
for graphs on $\kappa^+$ in the extension by $\mathbb P$ is $\lambda$.
\end{theorem}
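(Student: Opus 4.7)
The plan is to split into two inequalities. For the upper bound I would first compute cardinal arithmetic in the extension: using that $\mathbb{P}=\mathrm{Add}(\kappa,\lambda)$ is $<\!\kappa$-closed and $\kappa^+$-cc, a standard nice-names count gives $2^{\kappa^+}\le \lambda^{\kappa^+}=\lambda$ in $V[G]$ (the last equality because $\cf(\lambda)\ge\kappa^{++}$), and $2^{\kappa^+}\ge 2^{\kappa}\ge\lambda$ since $\lambda$ many Cohen subsets of $\kappa$ have been added. The collection of all graphs on $\kappa^+$ is then itself a universal family of size $2^{\kappa^+}=\lambda$, yielding the upper bound.

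For the lower bound I would argue by contradiction via the standard Cohen toggling template. Suppose $p_*\Vdash\{\name{G}_i:i<\mu\}$ is universal with $\mu<\lambda$. By $\kappa^+$-cc, each $\name{G}_i$, being a name for a subset of $[\kappa^+]^2$, has support of size $\le \kappa^+$, so the union $\Sigma\subseteq\lambda$ of these supports satisfies $|\Sigma|\le \mu\cdot\kappa^+<\lambda$ (from $\cf(\lambda)\ge\kappa^{++}$ and $\lambda>\kappa^+$). Pick distinct $\alpha_\xi\in\lambda\setminus\Sigma$ for $\xi<\kappa^+$ and let $\name{H}$ be the canonical name for the graph on $\kappa^+$ in which, for $\xi<\eta$, the pair $\{\xi,\eta\}$ is an edge iff the generic Cohen subset at coordinate $\alpha_\eta$ takes value $1$ at position $\xi$. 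By universality, some $p_0\le p_*$ forces an induced embedding $\name{f}:\name{H}\hookrightarrow\name{G}_{i^*}$; for each $\xi<\kappa^+$ I would choose $p_\xi\le p_0$ and $\beta_\xi<\kappa^+$ with $p_\xi\Vdash\name{f}(\xi)=\beta_\xi$.

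The central combinatorial step is a $\Delta$-system thinning, enabled by $\kappa^{<\kappa}=\kappa$. I would extract $S^*\subseteq\kappa^+$ of size $\kappa^+$ on which the $<\!\kappa$-sized supports of the $p_\xi$ form a $\Delta$-system with root $R$ and $p_\xi\!\restriction\!R$ is a fixed condition (the latter using $2^{<\kappa}\le\kappa$); write $A_\xi=\mathrm{supp}(p_\xi)\setminus R$, so the $A_\xi$ are pairwise disjoint. I expect this is where the main technical care lies: I need $\xi<\eta$ in $S^*$ such that the bit $(\alpha_\eta,\xi)$ is undecided by $p_\xi\cup p_\eta$. Picking $\eta$ with $\alpha_\eta\notin R$ excludes at most $\kappa$ values; then avoiding the fewer-than-$\kappa$ many $\xi$ with $(\alpha_\eta,\xi)\in\mathrm{dom}(p_\eta)$, together with the at-most-one $\xi$ with $\alpha_\eta\in A_\xi$ (by disjointness of the tails), still leaves $\kappa^+$ admissible choices of $\xi<\eta$.

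With such $\xi,\eta$ in hand, $q:=p_\xi\cup p_\eta$ is a legitimate condition (compatibility coming from the $\Delta$-system structure) that forces $\name{f}(\xi)=\beta_\xi$ and $\name{f}(\eta)=\beta_\eta$ while leaving the coordinate $(\alpha_\eta,\xi)$ free. Its two one-point extensions $q_0,q_1$ setting that bit to $0$ and $1$ force opposite truth values for $\{\xi,\eta\}\in\name{H}$; but since they agree on every coordinate of $\Sigma\times\kappa$ and $\name{G}_{i^*}$ has support inside that set, they force the same truth value for $\{\beta_\xi,\beta_\eta\}\in\name{G}_{i^*}$, contradicting that $\name{f}$ is an induced embedding. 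The main obstacle is precisely this toggling step, and it is exactly what dictates the design choices of the argument: a one-bit-per-edge encoding of $\name{H}$ so that a single Cohen coordinate controls a single edge, coordinates chosen outside $\Sigma$ so that this bit is invisible to $\name{G}_{i^*}$, and the $\Delta$-system to disentangle the supports of the witnesses $p_\xi$.
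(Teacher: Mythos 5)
Your proposal is correct in substance, but for the lower bound it takes a genuinely different route from the paper's. The first move is shared: you both use the $\kappa^+$-cc to see that a putative universal family of size $<\lambda$ has names supported on a set $\Sigma$ of size $<\lambda$, leaving fresh Cohen coordinates. From there the paper argues semantically rather than with names: after factoring (this is where $\cf(\lambda)\ge\kappa^{++}$ is used to pull the family into the ground model), it defines $\kappa^{++}$ generic graphs $G_j$, each coded bipartitely by $\kappa^+$ fresh Cohen subsets (an edge between $\alpha<\kappa$ and $i\in[\kappa,\kappa^+)$ iff $\alpha\in A^j_i$), takes actual embeddings $h_j$ into the candidate family in the extension, and uses a club of $j<\kappa^{++}$ of cofinality $\kappa^+$ to capture the restriction of $h_j$ to $\kappa+1$ in a submodel generated by fewer columns; decoding $A^j_\kappa=\{\alpha<\kappa: h_j(\alpha)\mbox{ and }h_j(\kappa)\mbox{ joined in }H_{\gamma_j}\}$ then contradicts genericity of $A^j_\kappa$ over that submodel. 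You instead never leave the names: one-bit-per-edge encoding on fresh coordinates, conditions $p_\xi$ deciding $\name{f}(\xi)$, a $\Delta$-system (this is where you invoke $\kappa^{<\kappa}=\kappa$, which the paper needs anyway for the $\kappa^+$-cc), and the toggle $q_0,q_1$ agreeing on $\Sigma$. Your route buys locality: the contradiction comes from two conditions and a single undecided bit, with no club/cofinality bookkeeping and no need to capture the embedding in an intermediate model; the paper's route is shorter on paper because it treats the generic objects semantically and decides nothing by hand.

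Two repairs to your write-up. First, an indexing slip in the definition of $\name{H}$: a Cohen subset of $\kappa$ has only $\kappa$ bits, so ``value $1$ at position $\xi$'' is undefined for $\kappa\le\xi<\eta$, and correspondingly your toggled coordinate $(\alpha_\eta,\xi)$ lies outside the domain $\lambda\times\kappa$ of the forcing whenever $\xi\ge\kappa$ --- which is the typical case, since $\xi$ ranges over a set of size $\kappa^+$. Fix this either by choosing bijections $e_\eta:\kappa\to\eta$ and letting the bit governing $\{\xi,\eta\}$ be $(\alpha_\eta,e_\eta^{-1}(\xi))$, or by adopting the paper's bipartite encoding with edges only between $\alpha<\kappa$ and $i\in[\kappa,\kappa^+)$; in either case your avoidance count goes through verbatim ($<\kappa$ bits of $p_\eta$ on coordinate $\alpha_\eta$, at most one $\xi$ with $\alpha_\eta\in A_\xi$). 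Second, a harmless miscount: once $\eta$ is fixed there are only $\kappa$, not $\kappa^+$, candidates $\xi\in S^*\cap\eta$ (as $S^*$ has order type $\kappa^+$), but after excluding the fewer than $\kappa$ bad ones a suitable $\xi$ survives, and one good pair is all the toggle needs. (Also, your parenthetical that $\lambda^{\kappa^+}=\lambda$ ``because $\cf(\lambda)\ge\kappa^{++}$'' elides where $\lambda\ge 2^{\kappa^+}$ enters the arithmetic; this is at the same level of brevity as the paper's own assertion that $2^\kappa=2^{\kappa^+}=\lambda$ in the extension, so you are in good company.)
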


\begin{proof} First notice that in the extension we have that $2^\kappa=2^{\kappa^+}=\lambda$, and hence the total number of graphs on $\kappa^+$
is at most $\lambda$, so the universality number for graphs on $\kappa^+$ is also $\le\lambda$. Now we show that it is $\ge\lambda$.
Suppose to the contrary, that $\{H_\gamma:\,\gamma<\gamma^\ast\}$ for some $\gamma^\ast<\lambda$ in the
extension
are graphs with universe $\kappa^+$ that are universal for graphs on $\kappa^+$. By standard arguments about the factoring of
the Cohen forcing and using $\cf(\lambda)\ge\kappa^{++}$, we may assume that all graphs $H_\gamma$ are from the ground model.
Let $\langle A_i^j:\,i \in [\kappa,\kappa^{+}), j<\kappa^{++}\rangle$ be a 1-1 enumeration of the first $\kappa^{++}$ Cohen subsets 
of $\kappa$ added by $\mathbb P$, where the indexing is chosen for the convenience in the argument to follow. 
For each $j<\kappa^{++}$
we define in the extension a graph $G_j$ on $\kappa^+$ by letting for $\alpha<i<\kappa^+$ there be an
edge between $\alpha$ and $i$ iff $\alpha<\kappa\le i$ and $\alpha\in A^j_i$. For each $j$ let
$h_j$ be an embedding of $G_j$ to some $H_{\gamma_j}$.
Note that there is a club $C$ of $\kappa^{++}$ such that
for all $j\in C$ of cofinality $\kappa^+$, $h_j\rest[\kappa+1)$ is in $V[A^k_i:\,i\in [\kappa,\kappa^+), k<k^\ast]$ for some $k^\ast<j$. Then for any $j\in C$ we have
\begin{equation*}
\begin{split}
A^j_\kappa=\{\alpha<\kappa:\,(\alpha,\kappa)\mbox{ are an edge in } G_j\}=\\\{\alpha<\kappa:\,(h_j\rest[\kappa+1)(\alpha), 
h_j\rest[\kappa+1)(\kappa))\mbox{ are an edge in } H_{\gamma_j}\},
\end{split}
\end{equation*}
which is an object in $V[A^k_i:\,i\in [\kappa,\kappa^+), k<k^\ast]$, a contradiction.
${\eop}_{\ref{Cohensubsets}}$\end{proof}

Using a standard argument about Easton forcing we can see that it is equally easy to get negative universality results for graphs
at a class of regular cardinals:

\begin{theorem}\label{Easton} Suppose that the ground model $V$ satisfies GCH and $\mathcal C$ is a class of regular cardinals in $V$,
while $F$ is a non-decreasing function on $\mathcal C$ satisfying that for each $\kappa\in {\mathcal C}$ we
have $\cf(F(\kappa))\ge\kappa^{++}$. Let $\mathbb P$ be Easton's forcing to add $F(\kappa)$ Cohen subsets to $\kappa$
for each $\kappa\in{\mathcal C }$. Then for each $\kappa\in{\mathcal C }$ the universality number 
for graphs on $\kappa^+$ in the extension by $\mathbb P$ is $F(\kappa)$.
\end{theorem}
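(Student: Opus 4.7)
The plan is to reduce Theorem \ref{Easton} to Theorem \ref{Cohensubsets} via the standard factoring of Easton's product around the fixed $\kappa \in \mathcal{C}$. Write
\[
\mathbb{P} \;=\; \mathbb{P}_{<\kappa} \;\times\; \mathbb{P}_\kappa \;\times\; \mathbb{P}^{>\kappa},
\]
where $\mathbb{P}_\kappa$ is the Cohen poset adding $F(\kappa)$-many subsets of $\kappa$, and $\mathbb{P}_{<\kappa}$, $\mathbb{P}^{>\kappa}$ collect, with Easton support, the strictly smaller and the strictly larger coordinates of $\mathcal{C}$.

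I would first recall the standard Easton preservation facts under GCH in $V$: the factor $\mathbb{P}_{<\kappa}$ has the $\kappa$-cc (a $\Delta$-system argument, using GCH and the monotonicity of $F$), so the hypothesis $\kappa^{<\kappa}=\kappa$ of Theorem \ref{Cohensubsets} is preserved in the intermediate model $V[G_{<\kappa}]$; and $\mathbb{P}^{>\kappa}$ is at least $\kappa^+$-closed (and $\kappa^{++}$-closed whenever $\kappa^+\notin\mathcal{C}$), so it adds no new subsets of $\kappa$. Combined with the hypothesis $\cf(F(\kappa))\ge\kappa^{++}$, every premise of Theorem \ref{Cohensubsets} is available in $V[G_{<\kappa}]$ with $\lambda:=F(\kappa)$.

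Applying Theorem \ref{Cohensubsets} inside $V[G_{<\kappa}]$ gives universality number exactly $F(\kappa)$ for graphs on $\kappa^+$ in $V[G_{<\kappa}][G_\kappa]$. The lower bound then transfers to the full extension $V[G]$ by the same Cohen-diagonalization as in Theorem \ref{Cohensubsets}: any putative family $\{H_\gamma:\gamma<\gamma^*\}$ with $\gamma^*<F(\kappa)$ in $V[G]$ admits, by $\kappa^+$-closure of $\mathbb{P}^{>\kappa}$, $\kappa^+$-cc of $\mathbb{P}_\kappa$, and $\cf(F(\kappa))\ge\kappa^{++}$, nice names supported on a set of fewer than $F(\kappa)$ coordinates of $\mathbb{P}_\kappa$; a club of cofinality-$\kappa^+$ indices $j$ then contains one for which $h_j\rest(\kappa+1)$ lives in that small substage, so recovering $A^j_\kappa$ from $h_j\rest(\kappa+1)$ and $H_{\gamma_j}$ contradicts the Cohen-genericity of $A^j_\kappa$. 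For the upper bound, in the principal case $\kappa^+\notin\mathcal{C}$ the $\kappa^{++}$-closure of $\mathbb{P}^{>\kappa}$ forbids new subsets of $\kappa^+$, so all $2^{\kappa^+}=F(\kappa)$ graphs on $\kappa^+$ in $V[G]$ already lie in $V[G_{<\kappa}][G_\kappa]$ and trivially furnish a universal family of the right size.

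The main obstacle is the careful bookkeeping of the Easton factorization and the verification of its chain-condition and closure properties, none conceptually novel but needing to be tracked through the non-decreasing $F$. A secondary subtlety is the boundary case $\kappa^+\in\mathcal{C}$, where $\mathbb{P}^{>\kappa}$ is only $\kappa^+$-closed and does add subsets of $\kappa^+$; there the upper bound requires a supplementary argument, e.g., a homogeneity/renaming argument on the Cohen factor at $\kappa^+$ to ensure that new graphs at $\kappa^+$ still embed into graphs already present at the intermediate stage.
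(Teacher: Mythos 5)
There is a genuine gap, and it sits at the heart of your reduction. You want to apply Theorem \ref{Cohensubsets} inside $V[G_{<\kappa}]$, and for that you claim that $\mathbb P_{<\kappa}$ has the $\kappa$-cc and preserves $\kappa^{<\kappa}=\kappa$. Neither claim survives the actual hypotheses on $F$: the function is only required to be non-decreasing with $\cf(F(\mu))\ge\mu^{++}$, so nothing prevents some $\mu\in\mathcal C\cap\kappa$ from having $F(\mu)>\kappa$, indeed $F(\mu)=F(\kappa)$. In that case $V[G_{<\kappa}]\models 2^\mu=F(\mu)>\kappa$, hence $\kappa^{<\kappa}\ge 2^\mu>\kappa$ in the intermediate model, and the hypothesis of Theorem \ref{Cohensubsets} simply fails there (for instance $\mathcal C=\{\aleph_0,\aleph_2\}$ with $F$ constantly $\aleph_{17}$: after the first factor, $2^{\aleph_0}=\aleph_{17}$ and $\aleph_2^{\aleph_1}>\aleph_2$). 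Relatedly, $\mathbb P_{<\kappa}$ is in general only $(\sup(\mathcal C\cap\kappa))^+$-cc, not $\kappa$-cc, and in any case a chain condition would not protect $\kappa^{<\kappa}=\kappa$ against the blow-up of $2^\mu$. So the three-factor decomposition with $V[G_{<\kappa}]$ as a new ground model cannot work as stated; the Cohen poset at $\kappa$ even loses its $\kappa^+$-cc \emph{over that model}, which your club-and-nice-names diagonalization also needs.

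The paper avoids this by never peeling the small coordinates off into a separate ground model: they ride along. One first uses the closure of $\mathbb P^{>\kappa^+}$ to confine any putative universal family to the extension by $\mathbb P^{\le\kappa^+}$, and then splits $\mathbb P^{\le\kappa^+}$ \emph{along the column index of the $\kappa$-th Cohen factor only}, into $Q^{\le\theta}$ (all coordinates $\le\kappa^+$, but only columns $\beta\le\theta$ at $\kappa$) times $Q^{>\theta}$ (columns $\beta>\theta$ at $\kappa$). Since $\cf(F(\kappa))\ge\kappa^{++}$, some $\theta<F(\kappa)$ captures all the $H_\gamma$, and the diagonalization of Theorem \ref{Cohensubsets} is rerun with the columns of $Q^{>\theta}$, which are Cohen-generic over $V^{Q^{\le\theta}}$; the chain condition invoked is the $\theta^+$-cc of the Easton factors $\mathbb P^{\le\theta}$ as a whole, and $\kappa^{<\kappa}=\kappa$ is only ever used in $V$, where GCH supplies it. Your proposal would be repaired by absorbing $\mathbb P_{<\kappa}$ into the ``already counted'' side exactly as $Q^{\le\theta}$ does, rather than iterating. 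One further remark: your flag about the boundary case $\kappa^+\in\mathcal C$ for the \emph{upper} bound is a fair observation --- the paper's written proof only establishes the lower bound, and when $F(\kappa^+)>F(\kappa)$ the counting bound $2^{\kappa^+}$ exceeds $F(\kappa)$ --- but your proposed fix by renaming columns at $\kappa^+$ does not obviously work: an automorphism permuting columns turns the name of a graph into a name whose evaluation under the \emph{same} generic is a different graph, not an isomorphic copy, so homogeneity alone does not yield a small universal family there.
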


\begin{proof} Recall that the forcing notion $\mathbb P$ is constructed as follows: for each $\kappa\in {\mathcal C }$
we have the forcing $P_\kappa$ which adds $F(\kappa)$ Cohen subsets to $\kappa$, using functions 
of size $<\kappa$ from $\kappa\times F(\kappa)$ to $\{0,1\}$
with the extension given by the extension of functions. Then $\mathbb P$ is the Easton product of $\{P_\kappa:\,\kappa\in
{\mathcal C }\}$, which means that each condition $p\in \mathbb P$ is an element of $\Pi_{\kappa\in {\mathcal C}}P_\kappa$
with support ${\rm spt}(p)$ satisfying $|{\rm spt}(p)\cap\theta|<\theta$ for every regular cardinal $\theta$. Denoting by
$p_\kappa$ the projection of condition $p$ on the coordinate $\kappa$, each condition in
${\mathbb P}$ can be viewed as a function on triples $(\kappa, \alpha, \beta)$ where $p(\kappa, \alpha, \beta)$ is
defined as $p_\kappa(\alpha,\beta)$. Then standard arguments show that for every regular $\theta$ the forcing
breaks into ${\mathbb P}^{\le\theta}\times {\mathbb P}^{>\theta}$ where ${\mathbb P}^{\le\theta}=\{p\rest (\kappa,\alpha,\beta):\,\kappa\le
\theta\}$ and ${\mathbb P}^{>\theta}=\{p\rest (\kappa,\alpha,\beta):\,\kappa>
\theta\}$, and that ${\mathbb P}^{>\theta}$ is $\theta$-closed while ${\mathbb P}^{\le\theta}$ satisfies the $\theta^+$-chain condition. 

Now let $\kappa\in \mathcal C$ and suppose for a contradiction that in the extension by $\mathbb P$ we have 
a universal family $\{H_\gamma:\,\gamma<\gamma^\ast\}$ of graphs on $\kappa^+$ for some $\gamma^\ast<F(\kappa)$.
Since ${\mathbb P}^{>\kappa^+}$ is $\kappa^+$-closed, it does not add any new subsets to $\kappa^+$, and hence
the universal family is added by ${\mathbb P}^{\le{\kappa^+}}$. We shall once more use an argument about factoring, in that
for every $\theta<F(\kappa)$ we can consider ${\mathbb P}^{\le{\kappa^+}}$ as the product
\begin{equation*}
\begin{split}
Q^{\le \theta}=\{p\in {\mathbb P}^{\le{\kappa^+}}:\,(\kappa,\alpha,\beta)\in \dom(p)\implies \beta\le \theta\}\\
\times\quad Q^{>\theta} =\{p\in {\mathbb P}^{\le{\kappa^+}}:\,(\kappa,\alpha,\beta)\in \dom(p)\implies \beta> \theta\}.
\end{split}
\end{equation*}
Since $\cf(F(\kappa))\ge \kappa^{++}$,
there is some $\theta<F(\kappa)$ such that all graphs $H_\gamma$ are added by $Q^{\le \theta}$. Now we can basically
repeat the argument from the proof of Theorem \ref{Cohensubsets}: let $\langle A_i^j:\,i \in [\kappa,\kappa^{+}), j<F(\kappa)\rangle$ be a 1-1 enumeration of the Cohen subsets 
of $\kappa$ added by $Q^{>\theta} $ and for
each $j<\kappa^{++}$
we define in the extension a graph $G_j$ on $\kappa^+$ by letting for $\alpha<i<\kappa^+$ there be an
edge between $\alpha$ and $i$ iff $\alpha<\kappa\le i$ and $\alpha\in A^j_i$. For each $j$ let
$h_j$ be an embedding of $G_j$ to some $H_{\gamma_j}$.
Note that there is a club $C$ of $F(\kappa)$ such that
for all $j\in C$ of cofinality $\ge \kappa^+$, $h_j\rest[\kappa+1)$ is in $V[A^k_i:\,i\in [\kappa,\kappa^+), k<k^\ast]$ for some $k^\ast<j$. Then for every $j\in C$
\begin{equation*}
\begin{split}
A^j_\kappa=\{\alpha<\kappa:\,(\alpha,\kappa)\mbox{ are an edge in} G_j\}=\\\{\alpha<\kappa:\,(h_j\rest[\kappa+1)(\alpha), 
h_j\rest[\kappa+1)(\kappa))\mbox{ are an edge in } H_{\gamma_j}\},
\end{split}
\end{equation*}
which is an object in $V^{Q^{\le \theta}}[A^k_i:\,i\in [\kappa,\kappa^+), k<k^\ast]$, a contradiction.
${\eop}_{\ref{Easton}}$
\end{proof}

Things change at the successor of a singular! Positive results analogous to the D\v zamonja-Shelah  \cite{DjSh614} were obtained
for $\kappa$ of countable cofinality by D\v zamonja and Shelah in \cite{DzShPrikry} and for arbitrary cofinality by 
Cummings, D\v zamonja, Magidor, Morgan and Shelah in \cite{5authorsforcing}. We quote that general result:

\begin{theorem}[Cummings et al. \cite{5authorsforcing}] \label{glavni} If $\kappa$ is a supercompact cardinal,
$\lambda<\kappa$ is a regular cardinal and 
$\Theta$ is a cardinal with $\cf(\Theta)\ge\kappa^{++}$ and $\kappa^{+3}\le\Theta$
there is a forcing extension in which $\cf(\kappa)=\lambda$,  $2^{\kappa}=2^{\kappa^+} =
\Theta$ and there is a universal family of graphs on $\kappa^+$ of size $\kappa^{++}$.
\end{theorem}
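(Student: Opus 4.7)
\noindent\emph{Proof proposal.} The plan is to combine a Laver-style preparation of the supercompact $\kappa$ with a Prikry/Magidor-type forcing that simultaneously changes $\cf(\kappa)$ to $\lambda$, blows up $2^\kappa$ to $\Theta$, and, after lifting, preserves a supercompactness embedding $j: V \to M$ with $\crit(j) = \kappa$ and $j(\kappa) > \Theta$. The small universal family will be extracted from this lifted embedding, exactly as in the standard template for universal objects at successors of singulars.

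First I would apply a Laver preparation making the $\Theta$-supercompactness of $\kappa$ indestructible under $\kappa$-directed-closed forcings of size at most $\Theta$, and then force to arrange $2^\kappa = 2^{\kappa^+} = \Theta$ in a ground model $V_0$ for the main construction. The main forcing $\mathbb{R}$ is then a carefully interleaved combination of (a) a supercompact Prikry forcing (for $\lambda = \omega$) or a Radin--Magidor-type forcing (for $\lambda > \omega$) built from a coherent measure sequence derived from $j$, and (b) Cohen components adding subsets of $\kappa^+$. Using appropriately chosen guiding generics, one arranges that $j$ lifts to $j^+: V_0[G] \to M[G^*]$. The poset $\mathbb{R}$ is designed so that its ``upper part'' satisfies the $\kappa^{++}$-c.c.\ while its ``lower part'' is sufficiently closed to preserve $\kappa^+$ and to realize $\cf(\kappa) = \lambda$ while keeping the Prikry property.

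The universal family $\{H_\alpha : \alpha < \kappa^{++}\}$ is then produced from $j^+$ as follows. For any graph $H$ on $\kappa^+$ in the extension, the $\kappa^{++}$-c.c.\ of the upper part of $\mathbb{R}$ shows that $H$ has a nice name living in a subextension indexed by some $\alpha < \kappa^{++}$. The restriction $j^+(H) \rest j^+[\kappa^+]$, viewed in $M[G^*]$, is isomorphic to $H$ via $(j^+)^{-1}$. I would then choose the $H_\alpha$ in $V_0[G]$ so as to exhaust the $\kappa^{++}$ possible isomorphism types of such restrictions --- each type being realizable in $V_0[G]$ by the closure of $M$ --- and use $j^+$ to convert the match ``$j^+(H) \rest j^+[\kappa^+] \cong j^+(H_\alpha) \rest j^+[\kappa^+]$'' into an embedding of $H$ into $H_\alpha$.

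The main technical obstacle will be the simultaneous design of $\mathbb{R}$ and its guiding generics so that all of the following hold at once: the Prikry property, preservation of $\kappa^+$, the cofinality change of $\kappa$ to $\lambda$, liftability of $j$, and the genericity coherence that collapses the naively $\Theta$-many traces of graphs on $j^+[\kappa^+]$ down to only $\kappa^{++}$-many isomorphism types. Reconciling $2^\kappa = \Theta$ --- which introduces many new subsets of $\kappa$ --- with the preservation of $\kappa^+$ through a cofinality change to $\lambda < \kappa$ is the crux, and requires precisely the delicate machinery developed in \cite{DzShPrikry, 5authorsforcing}.
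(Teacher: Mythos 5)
You should know at the outset that the survey does not prove Theorem \ref{glavni} at all: it is quoted verbatim from \cite{5authorsforcing}, so the only fair benchmark is the proof given there (and, for $\lambda=\omega$, in \cite{DzShPrikry}). Measured against that, your top-level architecture is the correct template: a preparation of the supercompact $\kappa$, a Prikry-type (for $\lambda=\omega$) or Magidor/Radin-type (for uncountable $\lambda$) main forcing built from supercompactness measures which changes $\cf(\kappa)$ to $\lambda$ while blowing up $2^\kappa$ to $\Theta$, a lift of an embedding $j$, and extraction of a universal family of size $\kappa^{++}$. Two caveats even at this level: Laver indestructibility is of little direct use, since the main forcing is not $\kappa$-directed closed, so the lift must be threaded through the Prikry/Radin part by hand --- this is why the cited papers build a bespoke preparation and, for uncountable $\lambda$, a coherent sequence of measures with guiding generics rather than invoking Laver; and the $\kappa^{++}$-c.c.\ of the ``upper part'' only places a nice name in a subextension indexed by some ordinal below $\Theta$, not below $\kappa^{++}$, since $2^\kappa=\Theta$.

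The genuine gap is the extraction step, which is where the theorem actually lives. In the final model $2^{\kappa^+}=\Theta\ge\kappa^{+3}>\kappa^{++}$, so there are $\Theta$ pairwise non-isomorphic graphs on $\kappa^+$; and since, as you yourself note, $j^+(H)\rest j^+[\kappa^+]$ is isomorphic to $H$, the traces realize $\Theta$ many isomorphism types, not $\kappa^{++}$ many. Hence no family $\{H_\alpha:\alpha<\kappa^{++}\}$ can ``exhaust the possible isomorphism types of such restrictions'': the clause you defer to ``genericity coherence'' is not a technical wrinkle to be reconciled, it is the entire content of the theorem, and classifying traces up to isomorphism cannot deliver it. The proofs in \cite{DzShPrikry, 5authorsforcing} do not classify traces at all: the prospective universal graphs are \emph{added generically}, together with generically added partial embeddings of \emph{names} of arbitrary graphs into them --- via an iteration with book-keeping of length $\kappa^{++}$ in \cite{DzShPrikry}, and via posets interleaved into the Radin-style main forcing in \cite{5authorsforcing} --- and universality is proved by a density/amalgamation argument using master conditions and guiding generics over $M$, with the Prikry property (a Mathias-style genericity criterion) converting embeddings of names into embeddings in the final model. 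Note also that your closing inference runs the wrong way through elementarity: from $j^+(H)\rest j^+[\kappa^+]\cong H$ and a match of traces you obtain, in $M[G^*]$, an embedding of $H$ (not of $j^+(H)$) into $j^+(H_\alpha)$, and that statement does not pull back under $j^+$ to an embedding of $H$ into $H_\alpha$; to reflect, you would need an embedding of $j^+(H)$ itself into $j^+(H_\alpha)$, which is exactly what the generic-embedding machinery is designed to produce.
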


However, it is not known in Theorem \ref{glavni} if the universality number is exactly $\kappa^{++}$. For all we know, in that
model there could be a universal graph on $\kappa^+$. Worse, we do not know how to imitate the negative universality
presented in Theorems \ref{Cohensubsets} and \ref{Easton} above. We do not know how to obtain a model in which the relevant instances of GCH fail and the universality number
of graphs is $2^\lambda$ for $\lambda$ the successor of a singular. We have given Shelah's argument about the Cohen forcing in 
gory detail to invite the reader to think if anything like this can be produced at a singular $\kappa$. Initial results by Cummings
and Magidor (private communication) indicate that a naive generalization might not be possible. Perhaps there is some sort of singular cardinal hypothesis-like behaviour here. Perhaps we cannot just monkey around with the universality number for
graphs at the successor of  singular even when we are basically as far from $L$ as we can possibly be, at least as far as the power set function is concerned? 

\section{Conclusion} We have discussed two problems where the intuition of the singular cardinal being in a sense more difficult than a regular one, seems to be completely false. In fact, the truth seems to be that although the properties of the singular cardinals are
{\em harder to discover}, once we have done that difficult discovery, these properties are actually {\em nicer} than their analogues at the regular
cardinals. Some other results but the ones presented here can be viewed with this idea in mind, for example does not the whole
story of the Singular Cardinal Hypothesis including  the celebrated theorem of Shelah
$[(\forall n< \omega)  2^{\aleph_n}<\aleph_\omega]\implies 2^{\aleph_\omega}<\aleph_{\omega_4}$, does not this story say that
the singulars are in fact more intuitive than the regulars? Erd\"os has said something to the extent of the infinite being the
easy part, and the finite the difficult one. If the infinite is the limit of the finite, a singular cardinal is a limit of the successors of regulars,
and maybe it is at such limits that the unruly universe of set theory wishes to express its more tame behaviour. It seems possible
that by investigating finer combinatorics than that expressed by the power set function we may find combinatorial versions
of SCH which are just outright true.

\bibliographystyle{plain}
\bibliography{biblio}

\end{document}